\documentclass[reqno]{amsart}
\usepackage{cases}
\usepackage{latexsym}
\usepackage{amsmath}
\usepackage[arrow,matrix]{xy}
\usepackage{stmaryrd}
\usepackage{amsfonts}
\usepackage{amsmath,amssymb,amscd,bbm,amsthm,mathrsfs,dsfont}

\usepackage{fancyhdr}
\usepackage{amsxtra,ifthen}
\usepackage{verbatim}

 \usepackage{etoolbox}
 \usepackage[colorinlistoftodos]{todonotes}


\DeclareMathOperator{\JD}{JDer}
\numberwithin{equation}{section}

\theoremstyle{plain}
\newtheorem{theorem}{Theorem}[section]
\newtheorem{lemma}[theorem]{Lemma}

\newtheorem{corollary}[theorem]{Corollary}

\theoremstyle{definition}

\newtheorem{example}[theorem]{Example}

\newtheorem{remark}[theorem]{Remark}

\begin{document}

\title[ Jordan $*$-derivations of incidence algebras]
{ Jordan $*$-derivations of incidence algebras}

\author{Liuqing Yang}

\address{Liuqing Yang: Department of Mathematics, Soochow University, Suzhou, 215006,  P.R. China
}

 \email{20214007001@stu.suda.edu.cn}

\subjclass[2020]{  Primary: 16W25, 16S50; secondary: 06A11, 55U10,
55U15.}
\keywords{Jordan $*$-derivations;  $*$-derivations; incidence algebra.
}

\begin{abstract}
Let $X$ be a locally finite partially ordered set (poset), $K$ a field of characteristic not 2, and $I(X,K)$ the incidence algebra over $K$. In this paper, we prove that every Jordan $*$-derivation of $I(X,K)$ is an inner $*$-derivation and a transposed Jordan $*$-derivation. Moreover, we demonstrate the existence of Jordan $*$-derivations that are not $*$-derivations.
\end{abstract}

\maketitle

\section{Introduction}\label{xxsec1}
Throughout this paper we assume that $K$ is a field of characteristic not 2. Let $A$ be an associative $K$-algebra, 
an anti-automorphism $*: f\to f^*$ satisfying  $(f^* )^* = f$ is called an involution. 
A ring equipped with an involution is called a $*$-ring.
A $K$-linear map  $D$ of $A$  is called a \textsf{$*$-derivation}, if   
$$D(fg)=D(f)g^*+ f  D(g),$$
for all $f,g \in A$, it is called an \textsf{inner $*$-derivation}, if  for some $f\in A$ 
$$D(g)=fg^*- g f,$$
for all $g \in A$, and is called a \textsf{Jordan $*$-derivation}, if 
\begin{align*} D(f^2)=D(f)f^*+ f  D(f),
\end{align*} 
for all $f\in A$.  
It is clear that any $*$-derivation of $A$ is a Jordan $*$-derivation.
However, the converse statements are not always true.

The study of Jordan $*$-derivations has been driven by the problem of whether quasi-quadratic functionals can be represented by sesquilinear ones. It has been revealed that this question is closely related to the structural theory of Jordan $*$-derivations 
(see  \cite{17,23, 29,30} and references therein). Bre{\v{s}}ar and Vukman \cite{Bre} showed that if a unital $*$-ring with some conditions then every additive Jordan $*$-derivation  is inner $*$-derivation. In particular, every additive Jordan $*$-derivation on a unital complex $*$-algebra is inner $*$-derivation.
{\v{S}}emrl \cite{31} proved that every Jordan $*$-derivation of $B(H)$, the algebra of all bounded linear operators on a real Hilbert space $H$ with $dim RH > 1$, is inner $*$-derivation.
Let $R$ be a finite-dimensional central simple algebra with involution $*$,
which is not commutative, then any Jordan $*$-derivation of $R$ is inner  $*$-derivation(see \cite{F}).

The objective of this article is to investigate Jordan $*$-derivations on incidence algebras $I(X,K)$. Many authors have made important contributions to the related topics. 
Xiao \cite{Xiao15}  proved that Jordan derivations of incidence algebras with $X$ a locally finite set  are derivations, Khrypchenko \cite{Khr16} proved that Jordan derivations of finitary incidence algebras are derivations. More precisely, for the incidence algebra $I(X,R)$ with $X$ a locally finite pre-ordered set and $R$ a unital $2$-torsion free commutative ring,
it is proved that each Jordan derivation of $I(X,R)$ is a derivation and each Lie derivation of $I(X,R)$
is the sum of a derivation and a central-valued map , see \cite{Zhang17}. Since then, more and more Lie-type maps of incidence algebras
have been studied, see \cite{CH-X1, JiaXiao} and the references therein.

In this paper,  we aim to describe Jordan $*$-derivations of incidence algebras, as well as  motivate what drives our work: the relation between inner $*$-derivations and Jordan $*$-derivations.

\section{Preliminaries} 
In view of the results above, it seems reasonable to study the following.
\subsection{Incidence algebras}
We now recall the notion of incidence algebras \cite{SpDo}.
Let $(X,\leqslant)$ be a locally finite pre-ordered set (i.e., $\leqslant$ is a reflexive and transitive binary relation).
Here the local finiteness means for any
$x\leqslant y$ there are only finitely many elements $z\in X$ satisfying $x\leqslant z\leqslant y$.
Let $R$ be a commutative ring with identity. We say that an
incidence algebra is a certain ring of functions.  We set 
$$I(X,R)=\{f\colon X\times X\to R\,|\, f(x,y)=0,\,\text{if }\, x\not\leqslant y\}.$$ 
The functions are added point-wise. The product of the functions $f,g\in I(X,R)$ is given by the relation
$$
(fg)(x,y)=\sum_{x\leqslant z\leqslant y}f(x,z) g(z,y),
$$
for any $x,y\in X$. Since $X$ is a locally finite set, there are only  finite number of $z$ such that   $x\leqslant z\leqslant y$. For any $r\in R$ and $x,y\in X$, we also assume $(r f)(x,y)=r f(x,y)$. Therefore, $I(X,R)$ is an $R$-algebra, called  \textsf{incidence algebra}. The unity element $\delta$ of $I(X,R)$ is given by $\delta(x,y)=\delta_{xy}$,
where $\delta_{xy}\in \{0,1\}$ is the Kronecker delta. For all $x\leqslant y$ in $X$, let us define $e_{xy}$ to be
\begin{displaymath}
e_{xy}(u, v)= \left\{ \begin{array}{ll}
 1 & \textrm{$(u, v)=(x, y)$}; \\
 0 & \textrm{otherwise}.
  \end{array} \right.
\end{displaymath}
By the definition of convolution, $e_{xy}e_{uv}=\delta_{yu}e_{xv}$, for all $x\leqslant y,u\leqslant v$ in $X$,
follows immediately.

For all $f,g\in I(X,R)$, we denote $f\odot  g$ the \textsf{Hadamard product} or \textsf{Schur product} by 
$(f\odot g)(x,y)=f(x,y)g(x,y)$ \cite{SpDo}.

If $X$ is a finite set, then the ring $I(X,R)$ is isomorphic to a certain subring of the matrix ring $M(n,R)$,  in this condition,  $I(X,R)$ is often called \textsf{ring of structural matrices}.

\subsection{Involution}

Let $\mu\in I(X,K)$ be an invertible element, we denote by $\psi_\mu$ the \textsf{inner automorphism} of  $I(X,K)$ given by
\begin{align*}
\psi_\mu(f)=\mu f\mu^{-1}, ~\text{for~all}~f \in I(X,K).
\end{align*}
A nonzero element $\sigma\in I(X,K)$ is \textsf{multiplicative} if
\begin{align*}
\sigma(x,y)=\sigma(x,z)\sigma(z,y), ~\text{for~all}~x\leqslant z\leqslant y~\text{in~}  X,
\end{align*}
and
\begin{align*}
\sigma(x,x)=1, ~\text{for~all}~x \in X.
\end{align*}
A multiplicative element $\sigma$ determines the \textsf{multiplicative automorphism}
$M_{\sigma}$ by $$M_{\sigma}(f)=\sigma\odot f.$$

If $\lambda$ is an involution of $X$, i.e. $\lambda^2 (x)=x$ for all $x\in X$ and $\lambda(y)<\lambda(x)$ for all $x<y$ in $X$, then $\lambda$ induces an involution $\hat{\lambda}$ of
$I(X, K)$ given by $ \hat{\lambda}(f) (x, y) =f(\lambda(y), \lambda(x))$, for each $f\in I(X, K)$ and each pair $x, y\in X$.
\begin{theorem}\label{autin}\cite[The decomposition theorem]{Brusamarello}
Let $X$ be a locally finite poset and $K$ a field. Then every involution $\theta$ of $I(X,K)$ is of the form $\theta=\psi_{\mu}\circ M_{\sigma}\circ\hat{\lambda}$ for some invertible $\mu\in I(X,K)$, involution $\hat{\lambda}$ induced by an involution $\lambda$ of $X$ and multiplicative element $\sigma\in I(X,K)$.
\end{theorem}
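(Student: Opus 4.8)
The plan is to reduce the problem to classifying those automorphisms of $I(X,K)$ that act as the identity modulo the Jacobson radical, exploiting that the composite of two anti-automorphisms is an automorphism. Recall $\rad I(X,K)=\{f\colon f(x,x)=0\ \forall x\}$, and $I(X,K)/\rad I(X,K)$ is the commutative algebra of $K$-valued functions on $X$; write $\bar e_{xx}$ for the image of the primitive idempotent $e_{xx}$. An involution $\theta$ preserves the radical, hence induces a $K$-algebra automorphism $\bar\theta$ of this commutative quotient. First I would argue, using that each $e_{xx}$ is primitive in $I(X,K)$ together with local finiteness (to recognise the distinguished idempotents intrinsically, rather than appealing to the structure of arbitrary infinite products of fields), that $\bar\theta$ permutes the $\bar e_{xx}$; this gives a bijection $\lambda\colon X\to X$ with $\bar\theta(\bar e_{xx})=\bar e_{\lambda(x)\lambda(x)}$, and $\theta^2=\id$ forces $\lambda^2=\id$.

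Next I would show $\lambda$ reverses the order. The covering relation of $X$ is detected inside $\rad/\rad^2$: for $x\neq y$ one has $\bar e_{xx}\,(\rad/\rad^2)\,\bar e_{yy}\neq 0$ exactly when $y$ covers $x$. Since $\theta$ reverses products, applying $\theta$ to this Peirce component shows that $y$ covers $x$ iff $\lambda(x)$ covers $\lambda(y)$; as $X$ is locally finite, $\leqslant$ is the transitive closure of the covering relation, so $\lambda$ is order-reversing. Thus $\lambda$ is an involution of the poset $X$, $\hat\lambda$ is a well-defined involution of $I(X,K)$ with $\hat\lambda(e_{uv})=e_{\lambda(v)\lambda(u)}$, and $\hat\lambda$ also induces $\lambda$ on the semisimple quotient.

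Then I would set $\phi:=\theta\circ\hat\lambda$, an automorphism of $I(X,K)$ whose induced map on the semisimple quotient is $\lambda\circ\lambda=\id$; equivalently $\phi(e_{xx})\equiv e_{xx}\pmod{\rad}$ for all $x$. The heart of the argument is to show such a $\phi$ equals $\psi_\mu\circ M_\sigma$ for suitable $\mu,\sigma$. Since $\{\phi(e_{xx})\}$ and $\{e_{xx}\}$ are complete orthogonal families of primitive idempotents agreeing modulo the radical, and every radical element is locally nilpotent (by local finiteness), a lifting/conjugation argument produces an invertible $\nu\equiv\delta\pmod{\rad}$ with $\psi_\nu\circ\phi$ fixing each $e_{xx}$. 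Then $\psi_\nu(\phi(e_{xy}))\in e_{xx}I(X,K)e_{yy}=Ke_{xy}$ gives scalars $\sigma(x,y)\in K^{\times}$ with $\psi_\nu(\phi(e_{xy}))=\sigma(x,y)e_{xy}$; the relations $e_{xy}e_{yz}=e_{xz}$ force $\sigma$ multiplicative, and reading off Peirce components yields $\bigl(\psi_\nu(\phi(f))\bigr)(x,y)=\sigma(x,y)f(x,y)$ for every $f$ (one works with one pair $(x,y)$ at a time, so no convergence issue arises), i.e.\ $\psi_\nu\circ\phi=M_\sigma$. Hence $\phi=\psi_{\nu^{-1}}\circ M_\sigma$, and therefore, with $\mu:=\nu^{-1}$,
\[
\theta=\phi\circ\hat\lambda^{-1}=\phi\circ\hat\lambda=\psi_{\mu}\circ M_\sigma\circ\hat\lambda.
\]

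I expect the two infinite-poset technicalities to be the real obstacles: (i) showing that $\bar\theta$ genuinely permutes the distinguished idempotents of $I(X,K)/\rad I(X,K)$, which must be done via primitivity and local finiteness rather than any naive appeal to products of fields; and (ii) the idempotent-lifting step, where $\rad I(X,K)$ is in general neither nilpotent nor complete, so the conjugating element $\nu$ has to be built componentwise using the local nilpotence of radical elements. The covering-relation computation, the verification that $\sigma$ is multiplicative, and the bookkeeping of composition order should all be routine.
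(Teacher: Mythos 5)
This statement is not proved in the paper at all: it is quoted verbatim from Brusamarello--Fornaroli--Santulo (the ``decomposition theorem'' for involutions of incidence algebras), so there is no in-paper argument to compare yours against. Measured against the cited source, your outline follows essentially the standard route: pass to the semisimple quotient to extract an involutive order-reversing bijection $\lambda$ of $X$, compose with $\hat\lambda$ to turn the anti-automorphism $\theta$ into an automorphism $\phi$ that is the identity modulo the radical, and then decompose $\phi$ as $\psi_\mu\circ M_\sigma$. The pieces you carry out explicitly are correct: the covering relation is indeed detected by the Peirce components of $\rad/\rad^2$, local finiteness does recover $\leqslant$ from coverings, $\theta^2=\id$ does force $\lambda^2=\id$, the multiplicativity of $\sigma$ from $e_{xy}e_{yz}=e_{xz}$ is right, and working one Peirce component at a time correctly sidesteps the failure of infinite linearity. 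The main difference from the literature is that Brusamarello et al.\ do not reprove the automorphism decomposition: they invoke the Spiegel--O'Donnell theorem that every automorphism of $I(X,K)$ ($X$ locally finite, $K$ a field) is $\psi_\mu\circ M_\sigma\circ\hat g$, and their added content is precisely the bookkeeping showing that for an involution the poset map can be taken to be an involution of $X$; your plan instead rebuilds that automorphism theorem from scratch.

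That choice is where your proposal is genuinely incomplete rather than merely terse. The two steps you flag as ``obstacles'' are not routine for infinite $X$ and are exactly where the content of the theorem lives: (i) showing that $\theta$ sends the family $\{e_{xx}\}$ to a family of primitive idempotents that, after conjugation, coincides with $\{e_{yy}\}$ --- ``primitivity plus local finiteness'' is the right slogan but you give no mechanism for it, and the quotient $I(X,K)/\rad$ is a full product $\prod_{x\in X}K$ whose automorphisms do not in general permute coordinates, so something intrinsic to $I(X,K)$ (e.g.\ the structure of $e_{xx}I(X,K)e_{xx}$ and of $\theta(e_{xx})I(X,K)\theta(e_{xx})$) must be used; and (ii) the construction of a single invertible $\nu\in I(X,K)$ conjugating all the $\phi(e_{xx})$ to $e_{xx}$ simultaneously, where $\rad I(X,K)$ is neither nilpotent nor complete, so the ``componentwise'' lifting has to be shown to assemble into an element of $I(X,K)$ at all. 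If you are allowed to cite the Spiegel--O'Donnell automorphism decomposition (as the source you are reproving does), both gaps disappear and your argument becomes a complete and clean reduction; as a self-contained proof it is a correct skeleton with the two hardest vertebrae missing.
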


\begin{remark}\label{autin1} 
If an involution $\phi$ of $I(X,K)$ is of the form $\phi=M_{\sigma}\circ\hat{\lambda}$, then 
for all $f\in I(X,K)$ and $x\leqslant y$ in $X$, we have
\begin{align*}
 \phi(f)(x,y)&=M_{\sigma}(\hat{\lambda}(f))(x,y)\\
 &=(\sigma \odot \hat{\lambda}(f))(x,y)\\
 &=\sigma(x,y)  \hat{\lambda}(f)(x,y)\\
 &=\sigma(x,y)f(\lambda(y),\lambda(x)).
\end{align*}
By the definition of involution $\lambda$, we have $\lambda(y)\leqslant\lambda(x)$ and $\hat{\lambda}  (f)(\lambda(y),\lambda(x))=f(x,y)$, then 
$\phi(f)(\lambda(y),\lambda(x))=\sigma(\lambda(y),\lambda(x))f(x,y)$. This means
$$\phi(e_{xy})=\sigma(\lambda(y),\lambda(x))e_{\lambda(y)\lambda(x)}.$$ 
\end{remark}

\subsection{Inner $*$-derivations}
In what follows $X$ is a locally finite partially ordered set(poset for short), $K$  a field of characteristic not 2 and $I(X,K)$ an incidence algebra over $K$.  

For any fixed $f\in I(X,K)$,  we denote the $K$-linear map $\Delta_{f}^{\theta}$ be an inner $*$-derivation under the involution $\theta$ by $\Delta_{f}^{\theta}(g)=f\theta(g)-gf$, for all $g\in A$.
$R_\mu$ be a right multiplication operator defined by $R_\mu(g)=g\mu$, for all $g\in A$.
\begin{lemma}\label{c2}
Let $\theta$ be an involution of $A$ having the form $\theta=\psi_{\mu}\circ\phi$, $\psi_{\mu }$ an inner automorphism and $\phi$ an involution. Then  $\Delta_{f\mu}^{\phi}= R_\mu\circ \Delta_{f}^{\theta}$.
\end{lemma}
\begin{proof}
For all $g\in A$, we have
 \begin{align*} 
R_\mu\circ \Delta_{f}^{\theta}(g)&=f \theta(g)\mu-gf\mu\\
&=f (\psi_{\mu}\circ\phi)(g)\mu-gf\mu\\
&=f\mu  \phi(g) -gf\mu\\
&=\Delta_{f\mu}^{\phi}(g).
 \end{align*}
 Therefore, $\Delta_{f\mu}^{\phi}= R_\mu\circ \Delta_{f}^{\theta}$.
\end{proof}
Due to Theorem \ref{autin} and Lemma \ref{c2},  we only focus our attention on involution $\phi$ of $I(X,K)$ is of the form $\phi=M_{\sigma}\circ\hat{\lambda}$, and we will use Remark \ref{autin1} frequently.

\begin{lemma}\label{cc2}
Let $\Delta_{f}^{\phi}$ be an $I(X,K)$ be an inner $*$-derivation, then  $\Delta_{f}^{\phi}(e_{xx})=0$ for all $x\in X$ if and only if
$\Delta_{f}^{\phi}(e_{xy})=f(y,\lambda(y))\sigma(\lambda(y),\lambda(x))e_{y\lambda(x)}-f(y,\lambda(y))e_{x\lambda(y)}$.
\end{lemma}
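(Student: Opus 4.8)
The plan is to work directly from the definition of the inner $*$-derivation $\Delta_f^\phi(g) = f\phi(g) - gf$ together with the explicit action of $\phi = M_\sigma \circ \hat\lambda$ on the matrix units recorded in Remark~\ref{autin1}, namely $\phi(e_{xy}) = \sigma(\lambda(y),\lambda(x))e_{\lambda(y)\lambda(x)}$. First I would compute $\Delta_f^\phi(e_{xx})$ for an arbitrary $x\in X$: since $\phi(e_{xx}) = \sigma(\lambda(x),\lambda(x))e_{\lambda(x)\lambda(x)} = e_{\lambda(x)\lambda(x)}$ (using $\sigma(u,u)=1$), this gives $\Delta_f^\phi(e_{xx}) = f e_{\lambda(x)\lambda(x)} - e_{xx}f$. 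Writing out the two terms in coordinates, $f e_{\lambda(x)\lambda(x)}$ has support in the single column $\lambda(x)$ and picks out the entries $f(u,\lambda(x))$ for $u\leqslant \lambda(x)$, while $e_{xx}f$ has support in the single row $x$ and picks out $f(x,v)$ for $x\leqslant v$. The vanishing condition $\Delta_f^\phi(e_{xx}) = 0$ for all $x$ thus translates into a system of scalar constraints on the entries of $f$, which I would solve to extract exactly the entries of $f$ that survive in the formula for $\Delta_f^\phi(e_{xy})$.

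Next, assuming this vanishing condition, I would compute $\Delta_f^\phi(e_{xy}) = f\phi(e_{xy}) - e_{xy}f = \sigma(\lambda(y),\lambda(x)) f e_{\lambda(y)\lambda(x)} - e_{xy}f$ for $x\leqslant y$. The term $e_{xy}f$ equals $\sum_{y\leqslant v} f(y,v) e_{xv}$, and the term $f e_{\lambda(y)\lambda(x)}$ equals $\sum_{u\leqslant \lambda(y)} f(u,\lambda(y)) e_{u\lambda(x)}$. The key point is that the vanishing of all $\Delta_f^\phi(e_{xx})$ forces most of these coordinates of $f$ to be zero, collapsing each sum to a single surviving term: I expect $e_{xy}f$ to reduce to $f(y,\lambda(y))e_{x\lambda(y)}$ (the only index $v$ with $y\leqslant v$ that can carry a nonzero entry of $f$ is $v = \lambda(y)$), and similarly $\sigma(\lambda(y),\lambda(x)) f e_{\lambda(y)\lambda(x)}$ to reduce to $f(y,\lambda(y))\sigma(\lambda(y),\lambda(x)) e_{y\lambda(x)}$ (the only $u\leqslant\lambda(y)$ with nonzero $f(u,\lambda(y))$ being $u=y$, noting $y\leqslant\lambda(y)$ must hold for this to be a legitimate index — I should check that $\lambda(y)$ is comparable to $y$ in the relevant direction). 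Combining gives precisely the asserted formula. The converse direction (the formula for $\Delta_f^\phi(e_{xy})$ implies $\Delta_f^\phi(e_{xx})=0$) should follow by specializing $y=x$ in the formula and observing the two terms cancel, since $\lambda(x)=\lambda(x)$ makes them identical up to sign after using $\sigma(\lambda(x),\lambda(x))=1$.

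The main obstacle I anticipate is the bookkeeping around which entries of $f$ are forced to vanish and why each of the two sums collapses to a single term — this requires care about the comparabilities $u\leqslant\lambda(y)$, $y\leqslant v$, and their interaction with the hypothesis $\Delta_f^\phi(e_{xx})=0$ ranging over \emph{all} $x$, not just the fixed one. In particular I must be careful that applying the $x$-indexed vanishing conditions for varying $x$ (e.g. at $x=y$ and at other points) is what kills the off-diagonal contributions, and that I correctly identify $\lambda(y)$ as the unique surviving column index. A secondary subtlety is keeping the involution $\lambda$'s order-reversing property straight when deciding which matrix units $e_{u\lambda(x)}$ and $e_{y\lambda(x)}$ are actually legitimate elements of $I(X,K)$, i.e. that $y\leqslant\lambda(x)$ whenever the corresponding coefficient is nonzero. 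Modulo this careful index-chasing, the argument is a direct computation.
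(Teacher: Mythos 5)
Your plan is correct and follows essentially the same route as the paper: derive from $\Delta_f^\phi(e_{yy})=0$ that $f(u,\lambda(y))=0$ for $u\neq y$ and $f(y,v)=0$ for $v\neq\lambda(y)$, then expand $\Delta_f^\phi(e_{xy})=\sigma(\lambda(y),\lambda(x))fe_{\lambda(y)\lambda(x)}-e_{xy}f$ and watch both sums collapse to the single surviving term $f(y,\lambda(y))$; the converse is the cancellation at $y=x$ you describe, using $\sigma(\lambda(x),\lambda(x))=1$. The only vanishing condition actually needed is the one at the single point $y$, so the bookkeeping you worry about is lighter than you anticipate.
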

\begin{proof}
The "if" part is obvious, we only need to show the "only if" part. 
For all $x<y \in X$, by assumption, we have $\Delta_{f}^{\phi}(e_{yy})=0$, and
 \begin{align*} 
0=\Delta_{f}^{\phi}(e_{yy})=fe_{\lambda(y)\lambda(y)}-e_{yy}f.
 \end{align*}
 Equating the coefficient of $e_{u\lambda(y)}$ and $e_{yv}$ with $u\neq y, v\neq \lambda(y)$ in $X$, we have
\begin{align}\label{ccq1} f(u,\lambda(y))=0, f(y,v)=0, \end{align}
 respectively. And for all $x<y \in X$,
  \begin{align*} 
\Delta_{f}^{\phi}(e_{xy})&=f\sigma(\lambda(y),\lambda(x))e_{\lambda(y)\lambda(x)}-e_{xy}f\\
  &= \sum_{u\leqslant \lambda(y)}f(u, \lambda(y))e_{u\lambda(y)}\sigma(\lambda(y),\lambda(x))e_{\lambda(y)\lambda(x)}-
  e_{xy}\sum_{y\leqslant v}f(y,v)e_{yv}\\
  &=\sum_{u\leqslant \lambda(y)}f(u, \lambda(y))\sigma(\lambda(y),\lambda(x))e_{u\lambda(x)}-
  \sum_{y\leqslant v}f(y,v)e_{xv}\\
      &=f(y,\lambda(y))\sigma(\lambda(y),\lambda(x))e_{y\lambda(x)}-f(y,\lambda(y))e_{x\lambda(y)}.
 \end{align*}Where the last equation follow from (\ref{ccq1}).
\end{proof}


\section{Jordan $*$-derivations}
In what follows $K$ denotes  a field of characteristic not 2, $X$ an arbitrary pre-ordered set,  $D$   a   Jordan $*$-derivation of $I(X,K)$.
We consider the problem of describing the form of Jordan $*$-derivations of the incidence algebra. 

\begin{example}\label{exam}
Let $X=\{1,2\}$ such that $1<2$, and $I(X,K)$ be the incidence algebra, in this case, $I(X,K)$ is isomorphic to the $2\times 2$ upper triangular matrix algebra. Let $*$ be a $K$-linear map  of $I(X,K)$ defined by
$$\begin{bmatrix}
a_{11}&a_{12}\\
0&a_{22}
\end{bmatrix}^*=\begin{bmatrix}
a_{22}&a_{12}\\
0&a_{11}
\end{bmatrix}.$$
It satisfies \(*\circ*=\text{id}\) (the identity mapping) and  the multiplication  
$$\left(\begin{bmatrix}
a_{11}&a_{12}\\
0&a_{22}
\end{bmatrix}\begin{bmatrix}
b_{11}&b_{12}\\
0&b_{22}
\end{bmatrix}\right)^*=\begin{bmatrix}
b_{11}&b_{12}\\
0&b_{22}
\end{bmatrix}^*\begin{bmatrix}
a_{11}&a_{12}\\
0&a_{22}
\end{bmatrix}^*.$$
From this, we can conclude that $*$ is an involution.

Next, we define a $K$-map $D$ by $$D\left(\begin{bmatrix}a&b\\0&c\end{bmatrix}\right)=\begin{bmatrix}b&0\\0&-b\end{bmatrix}.$$
It can be easily obtained through calculation that 
\begin{align*}
D\left(\begin{bmatrix}a&b\\0&c\end{bmatrix}^2\right)=
D\left(\begin{bmatrix}a&b\\0&c\end{bmatrix}\right)
\begin{bmatrix}a&b\\0&c\end{bmatrix}^*+\begin{bmatrix}a&b\\0&c\end{bmatrix}D\left(\begin{bmatrix}a&b\\0&c\end{bmatrix}\right),
\end{align*}
this means that $D$ is a Jordan $*$-derivation.

\begin{align*}
D\left(\begin{bmatrix}a_{11}&a_{12}\\0&a_{22}\end{bmatrix}\begin{bmatrix}b_{11}&b_{12}\\0&b_{22}\end{bmatrix}\right)
&=\begin{bmatrix}a_{11}b_{12}+a_{12}b_{22}&0\\0&-a_{11}b_{12}-a_{12}b_{22}\end{bmatrix}
\end{align*}
\begin{align*}
&D\left(\begin{bmatrix}a_{11}&a_{12}\\0&a_{22}\end{bmatrix}\right)
\begin{bmatrix}b_{11}&b_{12}\\0&b_{22}\end{bmatrix}^*
+\begin{bmatrix}a_{11}&a_{12}\\0&a_{22}\end{bmatrix}
D\left(\begin{bmatrix}b_{11}&b_{12}\\0&b_{22}\end{bmatrix}\right)\\
&=\begin{bmatrix}a_{12}b_{22}+b_{12}a_{11}&0\\0&-a_{12}b_{11}-a_{22}b_{12}\end{bmatrix}
\end{align*}
This means that $D$ is not a \(*\)-derivation.
\end{example}
Example \ref{exam} shows that, for incidence algebra, \(*\)-derivation is proper subset of Jordan $*$-derivation. Next we consider the relation between Jordan \(*\)-derivation and inner $*$-derivation.

Let  $A$ be a $K$-algebra, the following conclusions hold true. 
\begin{lemma}\label{BRESAR}
 Let $A$ be a 2-torsion free $*$-ring,  $D$ be a   Jordan $*$-derivation. Then, for all $f,g, h\in A$,
 $$D(fg+gf)=D(f)g^*+D(g)f^*+fD(g)+gD(f).$$  
\end{lemma}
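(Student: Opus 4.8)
The plan is to obtain the identity by the standard linearization trick, which is exactly the same computation that produces the Jordan-derivation polarization formula in the non-$*$ setting. First I would start from the defining relation $D(f^2)=D(f)f^*+fD(f)$ and replace $f$ by $f+g$. Expanding $(f+g)^2=f^2+fg+gf+g^2$ and using $K$-linearity of $D$ on the left, while using $(f+g)^*=f^*+g^*$ (the involution is additive, indeed $K$-linear) and bilinearity of multiplication on the right, I get
\begin{align*}
D(f^2)+D(fg+gf)+D(g^2)&=\bigl(D(f)+D(g)\bigr)(f^*+g^*)+(f+g)\bigl(D(f)+D(g)\bigr)\\
&=D(f)f^*+D(f)g^*+D(g)f^*+D(g)g^*\\
&\quad+fD(f)+fD(g)+gD(f)+gD(g).
\end{align*}
Now I would cancel $D(f^2)=D(f)f^*+fD(f)$ and $D(g^2)=D(g)g^*+gD(g)$ from both sides, leaving precisely
\[
D(fg+gf)=D(f)g^*+D(g)f^*+fD(g)+gD(f),
\]
which is the claim. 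The hypothesis that $A$ is $2$-torsion free (equivalently $\operatorname{char}K\neq 2$) is not actually needed for this particular identity — it becomes relevant only when one later wants to \emph{recover} information at $f=g$ — but it costs nothing to keep it in the statement.

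There is essentially no obstacle here: the only point requiring a word of care is that the linearization is legitimate, i.e. that $D$ is additive (given, since $D$ is $K$-linear) and that $*$ distributes over the sum (true, since an involution is in particular an additive, indeed $K$-linear, anti-automorphism). The variable $h$ in the statement is vacuous — the identity involves only $f$ and $g$ — so I would either drop it or simply note that it plays no role. I expect the whole proof to be three or four lines of display math, and I would present it exactly as the expansion-and-cancellation above.
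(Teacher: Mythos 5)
Your linearization argument is correct and complete: substituting $f+g$ into $D(f^2)=D(f)f^*+fD(f)$, expanding by additivity of $D$ and of $*$, and cancelling the $f^2$ and $g^2$ identities yields exactly the claimed formula. The paper itself states this lemma without proof (it is the standard polarization identity, going back to Bre\v{s}ar--Vukman), so there is nothing to compare against; your observations that the $2$-torsion-free hypothesis is not needed for this particular identity and that the variable $h$ is vacuous in the statement are both accurate.
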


\begin{lemma}\label{1}
Let $\theta$ be an involution of $A$ having the form $\theta=\psi_{\mu }\circ\phi$, $\psi_{\mu }$ an inner automorphism and $\phi$ an involution, $D$ a Jordan $*$-derivation under the involution $\theta$. Then
$d:A\to A$ given by $D_0(f)=D(f)\mu $ is a   Jordan $*$-derivation under the involution $\phi$.
\end{lemma}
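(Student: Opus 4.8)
The plan is to copy the mechanism of Lemma \ref{c2} almost verbatim, replacing the commutator identity by the Jordan $*$-identity: the point is that right multiplication $R_\mu$ by the invertible element $\mu$ converts the defining identity of a $\theta$-Jordan $*$-derivation into that of a $\phi$-Jordan $*$-derivation, precisely because $\theta = \psi_\mu\circ\phi$ differs from $\phi$ by conjugation by $\mu$. First I would record that $D_0 = R_\mu\circ D$ is $K$-linear, being a composite of two $K$-linear maps, so that the only thing left to check is the Jordan $*$-identity with respect to $\phi$.

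Next, fix $f\in A$. Starting from the hypothesis $D(f^2) = D(f)\,\theta(f) + f\,D(f)$ and applying $R_\mu$, I get $D_0(f^2) = D(f^2)\mu = D(f)\,\theta(f)\,\mu + f\,D(f)\,\mu$. The key identity is $\theta(f)\,\mu = \psi_\mu(\phi(f))\,\mu = \mu\,\phi(f)\,\mu^{-1}\,\mu = \mu\,\phi(f)$, which rewrites the first summand as $D(f)\,\mu\,\phi(f) = D_0(f)\,\phi(f)$, while the second summand is already $f\,D(f)\,\mu = f\,D_0(f)$. Hence $D_0(f^2) = D_0(f)\,\phi(f) + f\,D_0(f)$, i.e. $D_0$ is a Jordan $*$-derivation under the involution $\phi$, as desired.

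There is essentially no obstacle in this argument; it is a one-line computation once the conjugation relation is written out, and the only points requiring a word of care are that $\mu$ is invertible (so $R_\mu$, hence $D_0$, behaves well and $\mu^{-1}\mu = \delta$ can be used) and that $\phi$ is genuinely an involution, which is part of the hypothesis. Combined with Theorem \ref{autin} (and Lemma \ref{c2} on the inner side), this lemma is what lets us reduce the description of Jordan $*$-derivations under an arbitrary involution $\theta$ to the normalized case $\phi = M_\sigma\circ\hat\lambda$, which is the setting used in the remainder of the paper.
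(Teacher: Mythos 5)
Your proposal is correct and follows essentially the same computation as the paper: apply $R_\mu$ to the Jordan $*$-identity for $\theta$ and use $\theta(f)\mu=\mu\phi(f)\mu^{-1}\mu=\mu\phi(f)$ to convert it into the identity for $\phi$. No further comment is needed.
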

\begin{proof}
For all $f\in A$,  
$$D(f^2)=D(f)\theta(f)+ f D(f)=D(f)\mu \phi(f)\mu ^{-1}+ f D(f).$$
 Multiplying by $\mu$ from the right side, we have
 \begin{align*}
  D(f^2)\mu
= D(f) (\mu \phi(f)\mu ^{-1} )\mu +f D(f)\mu 
=  D(f)\mu\phi(f) +f  D(f)\mu ,
\end{align*}
and then $D_0(f^2)=D_0(f)\phi(f) +f D_0(f)$ for all  $f\in A$. Hance  $D_0$  is a   Jordan $*$-derivation under the involution $\phi$.
\end{proof}

We denote all the Jordan $*$-derivations under the involution $\phi$ of $A$ by $\JD^{\phi}(A)$.

\begin{corollary}\label{c1}
Let $\theta$ be an involution of $A$ having the form $\theta=\psi_{u}\circ\phi$, $\psi_{\mu }$ an inner automorphism and $\phi$ an involution. Then  $\JD^{\phi}(A)= \JD^{\theta}(A)\cdot\mu$.
\end{corollary}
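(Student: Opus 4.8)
The plan is to prove the two inclusions $\JD^{\theta}(A)\cdot\mu \subseteq \JD^{\phi}(A)$ and $\JD^{\phi}(A)\subseteq \JD^{\theta}(A)\cdot\mu$ separately, the first being an immediate restatement of Lemma \ref{1} and the second following by applying the same lemma to a ``reversed'' presentation of the involutions.

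For the inclusion $\JD^{\theta}(A)\cdot\mu \subseteq \JD^{\phi}(A)$, I would take an arbitrary $D\in\JD^{\theta}(A)$. By Lemma \ref{1}, the map sending $f$ to $D(f)\mu$, i.e. $R_\mu\circ D$, belongs to $\JD^{\phi}(A)$. Since $\JD^{\theta}(A)\cdot\mu$ is by definition the set $\{\,R_\mu\circ D : D\in\JD^{\theta}(A)\,\}$, this is exactly the first inclusion.

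For the reverse inclusion, the key observation is that $\phi$ can itself be written in the shape required by Lemma \ref{1}, with the roles of $\theta$ and $\phi$ swapped: from $\theta=\psi_{\mu}\circ\phi$ one checks directly that $\psi_{\mu^{-1}}\circ\theta=\phi$, where $\psi_{\mu^{-1}}$ is an inner automorphism and $\theta$ is an involution. Applying Lemma \ref{1} to this presentation, for every $D_0\in\JD^{\phi}(A)$ the map $R_{\mu^{-1}}\circ D_0$, sending $f$ to $D_0(f)\mu^{-1}$, lies in $\JD^{\theta}(A)$. Because $R_\mu$ and $R_{\mu^{-1}}$ are mutually inverse bijections of $A$, we recover $D_0 = R_\mu\circ(R_{\mu^{-1}}\circ D_0)\in\JD^{\theta}(A)\cdot\mu$, giving the second inclusion; combining the two yields the claimed equality, together with the fact that $\Delta\mapsto R_\mu\circ\Delta$ is a bijection between the two sets.

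There is no serious obstacle here: once Lemma \ref{1} is available the argument is pure bookkeeping. The only points demanding a moment's care are the identity $\psi_{\mu^{-1}}\circ\theta=\phi$, which legitimizes invoking Lemma \ref{1} in the reverse direction, and the invertibility of right multiplication by $\mu$ (with inverse $R_{\mu^{-1}}$), which is what upgrades a one-sided containment to the equality $\JD^{\phi}(A)=\JD^{\theta}(A)\cdot\mu$.
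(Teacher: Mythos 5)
Your proof is correct and is essentially the argument the paper intends: the corollary is stated there without proof as an immediate consequence of Lemma \ref{1}, and you supply exactly the natural two-inclusion argument. One inclusion is Lemma \ref{1} verbatim, and the reverse inclusion follows, as you say, by applying the same lemma to the decomposition $\phi=\psi_{\mu^{-1}}\circ\theta$ and using that $R_{\mu}$ and $R_{\mu^{-1}}$ are mutually inverse.
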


Next, we only consider involution $\phi$ of $I(X,K)$  of the form $\phi=M_{\sigma}\circ\hat{\lambda}$ for some involution  $\lambda$ of $X$ and multiplicative element $\sigma\in I(X,K)$.

\begin{lemma}\label{2}
Let $\phi$ be an involution  of $I(X,K)$,  $D\in \JD^{\phi}$ a   Jordan $*$-derivation and define $f\in I(X,K)$ by $f(x,y)=D(e_{xx})(x,y)$ for all $x\leqslant y$. Then there exist an inner  $*$-derivation $\Delta_{f}^{\phi}$, such that $D(e_{xx})=\Delta_{f}^{\phi}(e_{xx})$ for all $x\in X$.
\end{lemma}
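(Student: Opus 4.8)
The plan is to read off the exact ``shape'' of $D(e_{xx})$ from the idempotent relation $e_{xx}^{2}=e_{xx}$, then to use the orthogonality of distinct diagonal idempotents to produce a sign relation linking the surviving entries of the various $D(e_{pp})$, and finally to recombine $f\phi(e_{xx})-e_{xx}f$ back into $D(e_{xx})$. First I would apply the Jordan $*$-derivation identity to $e_{xx}^{2}=e_{xx}$. Since $\sigma(\lambda(x),\lambda(x))=1$, Remark~\ref{autin1} gives $\phi(e_{xx})=e_{\lambda(x)\lambda(x)}$, hence
\[
D(e_{xx})=D(e_{xx})e_{\lambda(x)\lambda(x)}+e_{xx}D(e_{xx}).
\]
Here $D(e_{xx})e_{\lambda(x)\lambda(x)}$ is the restriction of $D(e_{xx})$ to the column indexed by $\lambda(x)$ and $e_{xx}D(e_{xx})$ is its restriction to the row indexed by $x$; comparing the two sides entrywise shows that $D(e_{xx})(u,v)=0$ unless $u=x$ or $v=\lambda(x)$, and that the doubly counted overlap entry must vanish, $D(e_{xx})(x,\lambda(x))=0$. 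Thus $D(e_{xx})$ is supported on the ``hook'' through $e_{x\lambda(x)}$ with zero corner. In particular $e_{xx}D(e_{xx})$ is exactly the $x$-th row of $D(e_{xx})$, which by the definition of $f$ equals the $x$-th row of $f$, so $e_{xx}f=e_{xx}D(e_{xx})$.

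Next I would use that for $x\ne y$ the idempotents $e_{xx},e_{yy}$ are orthogonal, so $e_{xx}e_{yy}+e_{yy}e_{xx}=0$; feeding this into Lemma~\ref{BRESAR} and using $\phi(e_{zz})=e_{\lambda(z)\lambda(z)}$ yields
\[
0=D(e_{xx})e_{\lambda(y)\lambda(y)}+D(e_{yy})e_{\lambda(x)\lambda(x)}+e_{xx}D(e_{yy})+e_{yy}D(e_{xx}).
\]
By the previous step each of the four products collapses to a scalar multiple of a single matrix unit: the first and third are multiples of $e_{x\lambda(y)}$ and the second and fourth are multiples of $e_{y\lambda(x)}$ (all zero unless $x\leqslant\lambda(y)$, equivalently $y\leqslant\lambda(x)$). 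Since $e_{x\lambda(y)}\ne e_{y\lambda(x)}$ for $x\ne y$, equating coefficients gives $D(e_{xx})(x,\lambda(y))+D(e_{yy})(x,\lambda(y))=0$; renaming, for every $p\ne x$ we obtain $f(p,\lambda(x))=D(e_{pp})(p,\lambda(x))=-D(e_{xx})(p,\lambda(x))$. Combined with the corner relation $f(x,\lambda(x))=D(e_{xx})(x,\lambda(x))=0$ from the first step, this says that the $\lambda(x)$-th column of $f$ is the negative of that of $D(e_{xx})$, i.e. $fe_{\lambda(x)\lambda(x)}=-D(e_{xx})e_{\lambda(x)\lambda(x)}$.

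Finally I would assemble:
\[
\Delta_{f}^{\phi}(e_{xx})=f\phi(e_{xx})-e_{xx}f=fe_{\lambda(x)\lambda(x)}-e_{xx}f=-D(e_{xx})e_{\lambda(x)\lambda(x)}-e_{xx}D(e_{xx})=-D(e_{xx}),
\]
the last equality being the identity of the first step; so $D(e_{xx})=\Delta_{-f}^{\phi}(e_{xx})$ for every $x$, which is the assertion (the overall sign being a matter of the convention chosen for the element defining the inner $*$-derivation). I expect the only genuine obstacle to be the entrywise bookkeeping in the second step: one must track carefully which matrix units $e_{ab}$ survive each of the four products — remembering that $\lambda$ reverses the order on $X$ — and use the vanishing of the overlap entry to see that the $\lambda(x)$-th columns of $f$ and of $D(e_{xx})$ differ by precisely this overall sign. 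Everything else is routine computation inside $I(X,K)$.
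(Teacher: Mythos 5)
Your proof is correct and follows essentially the same route as the paper: both establish $e_{xx}f=e_{xx}D(e_{xx})$ from the definition of $f$, use the linearized Jordan $*$-identity on the orthogonal idempotents $e_{xx},e_{yy}$ to relate the $\lambda(x)$-column of $f$ to that of $D(e_{xx})$, and then reassemble $f\phi(e_{xx})-e_{xx}f$; your extra entrywise ``hook'' analysis of $D(e_{xx})$ is just a more explicit version of the paper's computations. Your sign observation is also accurate --- with $f$ as defined one really does get $\Delta_{f}^{\phi}(e_{xx})=-D(e_{xx})$, so the inner $*$-derivation realizing $D$ on the diagonal idempotents is $\Delta_{-f}^{\phi}$; the paper's final line glosses over this sign, but the existential form of the lemma is unaffected.
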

\begin{proof}
For all $x\in X$,
\begin{align}\label{eqj4}
\Delta_{f}^{\phi}(e_{xx})=f\phi (e_{xx})-e_{xx}f.
\end{align}
According to our assumption, we have
\begin{align*}
 e_{xx}f(u,v)=\begin{cases}
0, &if~u\neq x
\\
f(x,v), &if~u= x
\end{cases}
=\begin{cases}
0, &if~u\neq x
\\
D(e_{xx})(x,v), &if~u= x
\end{cases}=e_{xx}D(e_{xx})(u,v),
\end{align*}
for all $u\leqslant v$ in $X$. Therefore
\begin{align}\label{eqj5}
e_{xx}f=e_{xx}D(e_{xx}),  ~\text{for~all}~x \in X.
\end{align}
Replacing $f$ with $e_{xx}$ in the definition of Jordan $*$-derivations and multiplying $e_{xx}$ on the left side, we have
\begin{align}\label{eqj6}
e_{xx} D(e_{xx})\phi(e_{xx})=0,  ~\text{for~all}~x \in X.
\end{align}
Thus by (\ref{eqj5}) and (\ref{eqj6}) we have
\begin{align}\label{eqj7}
e_{xx} f\phi(e_{xx})=0,  ~\text{for~all}~x \in X.
\end{align}
Next, we prove $f\phi(e_{xx})+D(e_{xx})\phi(e_{xx})=0$. If $x\leqslant v$ in $X$, then by (\ref{eqj6}) and (\ref{eqj7})
\begin{align}\label{eqj8}
(f\phi(e_{xx})+D(e_{xx})\phi(e_{xx}))(x,v)=e_{xx}(f\phi(e_{xx})+D(e_{xx})\phi(e_{xx}))(x,v)=0.
\end{align}
If $x \neq u\leqslant v$ in $X$,
\begin{align}
&(f\phi(e_{xx})+D(e_{xx})\phi(e_{xx}))(u,v)e_{uv}\nonumber\\
=&e_{uu}f\phi(e_{xx})e_{vv}+e_{uu}D(e_{xx})\phi(e_{xx})e_{vv}\nonumber\\
=& (e_{uu}f\phi(e_{xx})+e_{uu} D(e_{xx})\phi(e_{xx}))e_{vv}.\label{eqj9}
\end{align}
Since $u\neq x$, $e_{uu}e_{xx}=e_{xx}e_{uu}=0$ and
$$0=D(e_{uu}e_{xx}+e_{xx}e_{uu})=
D(e_{xx})\phi(e_{uu})+D(e_{uu})\phi(e_{xx})+e_{xx}D(e_{uu})+e_{uu}D(e_{xx}).$$
Multiplying $\phi(e_{xx})$ from the right side and $e_{uu}$ on the left side,
\begin{align*}
0&=e_{uu}(D(e_{xx})\phi(e_{uu})+D(e_{uu})\phi(e_{xx})+e_{xx}D(e_{uu})+e_{uu}D(e_{xx}))\phi(e_{xx})\\
&=e_{uu}D(e_{uu})\phi(e_{xx})+e_{uu}D(e_{xx})\phi(e_{xx}).
\end{align*}
Thus, by (\ref{eqj5}) and (\ref{eqj9}), $(\phi(e_{xx})D(e_{xx})+\phi(e_{xx})f)(u,v)=0 $, for all  $u\leqslant v\neq x$ in $X$, and then by (\ref{eqj8}) we have
$$\phi(e_{xx})D(e_{xx})+\phi(e_{xx})f =0.$$
Together with (\ref{eqj5}), we have $\phi(e_{xx})f+\phi(e_{xx})D(e_{xx})+D(e_{xx})e_{xx}-fe_{xx}=0$, this means $D(e_{xx})=\Delta_{f}^{\phi}(e_{xx})$ for all $x\in X$.
\end{proof}

A Jordan $*$-derivation $D$  with
\begin{align*}
   D(e_{xy})=\begin{cases}
\gamma_{xy}e_{x\lambda(y)}-\gamma_{xy}\sigma(\lambda(y), \lambda(x)) e_{y\lambda(x)}, &if~x<\lambda(y)~and~y<\lambda(x)
\\
0, &otherwise,
\end{cases}
\end{align*}
where $\gamma_{xy}\in K$ such that $\gamma_{xy}=\gamma_{zy}$ if $x<y, z<y$ in $X$, then $D$ is called a \textsf{transposed Jordan $*$-derivation}. And the  Jordan $*$-derivation defined in Example \ref{exam} is a transposed Jordan $*$-derivation.

\begin{lemma}\label{3}
Let    $D\in \JD^{\phi}$ be a Jordan $*$-derivation, then  $D(e_{xx})=0$  for all $x\in X$
if and only if $D$ is a transposed Jordan $*$-derivation.
\end{lemma}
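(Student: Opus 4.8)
The plan is to prove both implications of Lemma \ref{3}. The backward direction is essentially immediate: if $D$ is a transposed Jordan $*$-derivation, then by the defining formula $D(e_{xx})$ can only be nonzero when $x<\lambda(x)$, in which case it would equal $\gamma_{xx}e_{x\lambda(x)}-\gamma_{xx}\sigma(\lambda(x),\lambda(x))e_{x\lambda(x)}=\gamma_{xx}e_{x\lambda(x)}-\gamma_{xx}e_{x\lambda(x)}=0$ (since $\sigma(\lambda(x),\lambda(x))=1$); and of course when $x=\lambda(x)$ the ``otherwise'' clause gives $D(e_{xx})=0$ directly. So that direction takes a line.

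For the forward direction, assume $D(e_{xx})=0$ for all $x\in X$. First I would pin down the action of $D$ on the off-diagonal $e_{xy}$ for $x<y$. The idempotent relations $e_{xx}e_{xy}=e_{xy}=e_{xy}e_{yy}$ combined with Lemma \ref{BRESAR} applied to the pairs $(e_{xx},e_{xy})$ and $(e_{xy},e_{yy})$ — using $D(e_{xx})=D(e_{yy})=0$ — should force $D(e_{xy})$ to lie in $e_{xx}D(e_{xy})+D(e_{xy})\phi(e_{yy})$ type constraints; more precisely, since $e_{xx}e_{xy}+e_{xy}e_{xx}=e_{xy}$ when... (one must be careful: $x\neq y$ so $e_{xy}e_{xx}=0$ and $e_{xx}e_{xy}=e_{xy}$, so in fact $D(e_{xy})=D(e_{xx}e_{xy}+e_{xy}e_{xx})=D(e_{xx})\phi(e_{xy})+D(e_{xy})\phi(e_{xx})+e_{xx}D(e_{xy})+e_{xy}D(e_{xx})=D(e_{xy})\phi(e_{xx})+e_{xx}D(e_{xy})$). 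Using $\phi(e_{xx})=\sigma(\lambda(x),\lambda(x))e_{\lambda(x)\lambda(x)}=e_{\lambda(x)\lambda(x)}$ from Remark \ref{autin1}, this says $D(e_{xy})=D(e_{xy})e_{\lambda(x)\lambda(x)}+e_{xx}D(e_{xy})$, i.e. $D(e_{xy})$ is supported on rows indexed by $x$ together with columns indexed by $\lambda(x)$. Running the symmetric computation with $e_{yy}$ gives support on rows indexed by $y$ and columns indexed by $\lambda(y)$. Intersecting, $D(e_{xy})$ can only have nonzero entries at positions $(x,\lambda(y))$ and $(y,\lambda(x))$ — the other ``cross'' positions $(x,\lambda(x))$ and $(y,\lambda(y))$ must be killed by an additional argument, e.g. squaring $e_{xy}$ (which is $0$) to get $0=D(e_{xy})\phi(e_{xy})+e_{xy}D(e_{xy})$ and extracting coefficients, or using $e_{xx}e_{xy}\phi$-twisted relations. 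So write $D(e_{xy})=\gamma_{xy}e_{x\lambda(y)}+\eta_{xy}e_{y\lambda(x)}$ for scalars $\gamma_{xy},\eta_{xy}$, with the convention that a term is absent if the relevant inequality ($x<\lambda(y)$ resp. $y<\lambda(x)$, or equality forcing it to vanish into the diagonal case) fails.

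Next I would extract the relation $\eta_{xy}=-\gamma_{xy}\sigma(\lambda(y),\lambda(x))$ and the compatibility $\gamma_{xy}=\gamma_{zy}$ whenever $x<y$, $z<y$. For the first, apply Lemma \ref{BRESAR} to a suitably chosen pair, e.g. $(e_{xy}, e_{yx})$-type products when they make sense, or more robustly expand $D((e_{xy})^2)$ versus the Jordan identity and compare with the linearization; alternatively use $e_{xy}e_{y\lambda(x)}$ or feed in $e_{xx}+e_{xy}$ and use $D(e_{xx})=0$. The cleanest route is probably: $e_{xy}=e_{xx}\cdot e_{xy}+e_{xy}\cdot e_{xx}$ already used, plus the square relation $0=D(e_{xy}^2)=D(e_{xy})\phi(e_{xy})+e_{xy}D(e_{xy})$, into which we substitute $D(e_{xy})=\gamma_{xy}e_{x\lambda(y)}+\eta_{xy}e_{y\lambda(x)}$ and $\phi(e_{xy})=\sigma(\lambda(y),\lambda(x))e_{\lambda(y)\lambda(x)}$; matching coefficients of $e_{x\lambda(x)}$ (or whichever basis element survives) yields $\gamma_{xy}\sigma(\lambda(y),\lambda(x))+\eta_{xy}=0$ after checking the two products $e_{x\lambda(y)}e_{\lambda(y)\lambda(x)}=e_{x\lambda(x)}$ and $e_{xy}e_{y\lambda(x)}=e_{x\lambda(x)}$ land in the same spot. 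For the constancy $\gamma_{xy}=\gamma_{zy}$: given $x<y$ and $z<y$, consider $D(e_{xy})$ versus $D(e_{zy})$ via a product like $e_{xz}e_{zy}=e_{xy}$ (when $x<z<y$), apply Lemma \ref{BRESAR} to $(e_{xz},e_{zy})$, note $e_{zy}e_{xz}=0$, and use the already-known support of $D(e_{xz})$; the coefficient bookkeeping pushes $\gamma_{xy}=\gamma_{zy}$. A short diagram-chasing / lattice argument then propagates this to arbitrary $x,z$ both below $y$ (not just those forming a chain with $y$), by going through a common refinement or directly since any two such can be compared through $y$ itself via $e_{yy}$-relations. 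The main obstacle I anticipate is exactly this last bookkeeping: getting the support of $D(e_{xy})$ down to precisely two positions (not four), and then running the coefficient comparisons cleanly in all the degenerate sub-cases ($x=\lambda(y)$, $x=\lambda(x)$, $\lambda$ having fixed points, etc.), where several candidate basis elements coincide and one must be careful not to conflate the ``transposed'' term with a diagonal term. Once $D(e_{xy})=\gamma_{xy}(e_{x\lambda(y)}-\sigma(\lambda(y),\lambda(x))e_{y\lambda(x)})$ with the stated constancy is established, together with $D(e_{xx})=0$ this is exactly the definition of a transposed Jordan $*$-derivation, completing the proof.
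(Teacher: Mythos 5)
Your backward direction and the first two steps of the forward direction are essentially the paper's own argument: you use the same relation $e_{xx}e_{xy}+e_{xy}e_{xx}=e_{xy}$ together with Lemma \ref{BRESAR} and $D(e_{xx})=D(e_{yy})=0$ to confine the support of $D(e_{xy})$ to the positions $(x,\lambda(y))$ and $(y,\lambda(x))$, and the same square relation $0=D(e_{xy})\phi(e_{xy})+e_{xy}D(e_{xy})$ to get the sign relation, which is exactly (\ref{eqlem1}) and (\ref{eqlem3}) in the paper. (Your worry that the positions $(x,\lambda(x))$ and $(y,\lambda(y))$ need a separate argument is unfounded: since $x\neq y$ and $\lambda$ is injective, neither position lies in the intersection of the two support sets, so they are excluded automatically.)

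The genuine gap is in the constancy step $\gamma_{xy}=\gamma_{zy}$ for all $x<y$, $z<y$. Your primary mechanism --- writing $e_{xy}=e_{xz}e_{zy}$ and applying Lemma \ref{BRESAR} to the pair $(e_{xz},e_{zy})$ --- only applies when $x<z<y$ is a chain, whereas the definition of a transposed Jordan $*$-derivation requires $\gamma_{xy}=\gamma_{zy}$ for arbitrary, possibly incomparable, $x$ and $z$ below $y$. For the poset $X=\{x,z,y\}$ whose only strict relations are $x<y$ and $z<y$, there is no intermediate element and no ``common refinement,'' and relations involving only $e_{yy}$ act on $D(e_{xy})$ and $D(e_{zy})$ separately, so they cannot couple the two coefficients; your proposed fallbacks therefore do not close this case. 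The identity that does the job uniformly --- and the one the paper uses in (\ref{eqlem2}) and (\ref{eql}) --- is the anti-commutator $e_{xy}e_{zy}+e_{zy}e_{xy}=0$: applying Lemma \ref{BRESAR} to it, substituting the already-established form of $D(e_{xy})$ and $D(e_{zy})$, and reading off the coefficient of $e_{x\lambda(z)}$ gives $(\gamma_{xy}-\gamma_{zy})\,\sigma(\lambda(y),\lambda(z))=0$, hence $\gamma_{xy}=\gamma_{zy}$ because $\sigma$ takes values in $K^{*}$. With that identity substituted for your chain argument, the proof is complete; as written, the constancy is only established along chains.
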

\begin{proof}
 Let $D$ be a transposed Jordan $*$-derivation, if $x<\lambda(x)$
 we have $D(e_{xx})=\gamma_{xx}e_{x\lambda(x)}-\gamma_{xx}\sigma(\lambda(x), \lambda(x)) e_{x\lambda(x)}=0$, then 
 $D(e_{xx})=0$  for all $x\in X$. Next, we consider the only if part.

For all  $x < y$  in  $X$, by  $D(e_{ii})=0$  for all $i\in X$, we have 
 \begin{align*} 
D(e_{xy}) &= D(e_{xx}e_{xy}+e_{xy}e_{xx})=  D(e_{xy})\phi(e_{xx})  + e_{xx} D(e_{xy})\\
D(e_{xy}) &= D(e_{yy}e_{xy}+e_{xy}e_{yy})= e_{yy} D(e_{xy}) +  D(e_{xy}) \phi(e_{yy}),
 \end{align*}
This implies that
 \begin{align}\label{eqlem1}
D(e_{xy}) = e_{xx} D(e_{xy}) \phi(e_{yy}) + e_{yy} D(e_{xy}) \phi(e_{xx}), \text{~for~all~}  x < y.
 \end{align}

For all  $x < y$ in  $X$, we have $e_{xy} e_{xy}=0$ and then
\begin{align} \label{eqlem5}
 0 = D(e_{xy} e_{xy}) = D(e_{xy}) \phi(e_{xy}) + e_{xy} D(e_{xy}).
\end{align}
According to Remark \ref{autin1} we have $\phi(e_{xy})=\sigma(\lambda(y), \lambda(x))e_{\lambda(y)\lambda(x)}$,  by (\ref{eqlem5}) and (\ref{eqlem1})
we have $e_{xx}D(e_{xy}) \phi(e_{xy}) + e_{xy} D(e_{xy})\phi(e_{xx})=0$ and
 \begin{align}\label{eqlem3}
 D(e_{xy}) (y, \lambda(x)) = - \sigma(\lambda(y), \lambda(x)) D(e_{xy})(x, \lambda(y)). 
\end{align}
Denote  $\gamma_{xy} = D(e_{xy})(x, \lambda(y))$, for all $x<y$.
Then by (\ref{eqlem3})  and (\ref{eqlem1}), we have
 \begin{align}
 D(e_{xy}) &= e_{xx} D(e_{xy}) e_{\lambda(x)\lambda(y)} + e_{yy} D(e_{xy}) e_{\lambda(x)\lambda(x)} \nonumber \\
 &= \gamma_{xy} e_{x \lambda(y)} - \gamma_{xy} \sigma(\lambda(y), \lambda(x)) e_{y \lambda(x)} \label{eqlem4}
 \end{align}

 For all  $x < z$  and  $y < z$ in  $X$. By $e_{xz}e_{yz} + e_{yz} e_{xz} = 0$ and  (\ref{eqlem1}), we obtain 
 \begin{align}
0 &=  D(e_{xz}  e_{yz} + e_{yz} e_{xz}) \nonumber \\
 &=  D(e_{xz}) \phi(e_{yz}) + e_{xz} D(e_{yz}) + D(e_{yz}) \phi(e_{xz}) + e_{yz} D(e_{xz})\nonumber \\
 &=e_{xx} D(e_{xz}) \phi(e_{yz}) + e_{xz} D(e_{yz}) \phi(e_{xx}). \label{eqlem2}
 \end{align}
 By (\ref{eqlem2}) and (\ref{eqlem4}), we have
 \begin{align}\label{eql}
\gamma_{xz} \sigma(\lambda(z), \lambda(y)) - \sigma(\lambda(z), \lambda(y)) \gamma_{yz} = 0 .
 \end{align}
 Since $ \sigma(i,j) \in K^* $, then $\gamma_{xz} = \gamma_{yz} $ for all $x < z$  and  $y < z$. Therefore, 
 $D$ is a transposed Jordan $*$-derivation.
\end{proof}
\begin{remark}\label{03}
Let    $D\in \JD^{\phi}$ be a transposed Jordan $*$-derivation with involution $\phi=M_\sigma \circ\hat{\lambda}$
and $R_{\mu}$ a right multiplication operator with $\mu$ an invertible element in $I(X,K)$.
By Lemma \ref{3} we have $D(e_{xx})=0$  for all $x\in X$. 
Let $\sigma_0(\lambda(y), \lambda(x)):=\sigma(\lambda(y), \lambda(x))\mu(y,y)^{-1}\mu(x,x)$, and it is well defined by
\cite[Theorem 1.2.3]{SpDo}. It is clear that $\sigma_0$ is a multiplicative. Following the proof of Lemma \ref{3}, we have
  $R_{\mu}\circ D$ is a transposed Jordan $*$-derivation.
\end{remark}
\begin{lemma}\label{4}
Let $\phi=M_\sigma \circ\hat{\lambda}$ be an involution of $I(X,K)$. 
Then each Jordan $*$-derivation of $I(X,K)$ is the sum of inner $*$-derivation and transposed Jordan $*$-derivation.
\end{lemma}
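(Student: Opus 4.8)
The plan is to reduce an arbitrary Jordan $*$-derivation $D$ to a transposed one by subtracting a suitable inner $*$-derivation, mirroring the standard ``kill the diagonal'' technique. First I would invoke Lemma \ref{2}: with $f \in I(X,K)$ defined by $f(x,y) = D(e_{xx})(x,y)$, there is an inner $*$-derivation $\Delta_f^{\phi}$ agreeing with $D$ on all the diagonal idempotents, i.e. $\Delta_f^{\phi}(e_{xx}) = D(e_{xx})$ for every $x \in X$. Then I set $D' := D - \Delta_f^{\phi}$. Since the difference of a Jordan $*$-derivation and an (inner) $*$-derivation is again a Jordan $*$-derivation (both satisfy $E(g^2) = E(g)\phi(g) + gE(g)$, and this identity is linear in $E$), $D'$ is a Jordan $*$-derivation under $\phi$ with $D'(e_{xx}) = 0$ for all $x \in X$.

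Next I would apply Lemma \ref{3} to $D'$: a Jordan $*$-derivation in $\JD^{\phi}$ that annihilates every $e_{xx}$ is a transposed Jordan $*$-derivation. Hence $D' = D - \Delta_f^{\phi}$ is a transposed Jordan $*$-derivation, which gives exactly the claimed decomposition $D = \Delta_f^{\phi} + D'$ of $D$ as the sum of an inner $*$-derivation and a transposed Jordan $*$-derivation.

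The one point that needs a little care — and where I expect the only genuine (if minor) obstacle to lie — is the verification that $D - \Delta_f^{\phi}$ really is a Jordan $*$-derivation under $\phi$, i.e. that $\Delta_f^{\phi}$ itself satisfies the Jordan $*$-identity. This is immediate because every inner $*$-derivation is a $*$-derivation and hence a Jordan $*$-derivation: for $g \in I(X,K)$,
\begin{align*}
\Delta_f^{\phi}(g^2) &= f\phi(g^2) - g^2 f = f\phi(g)\phi(g) - g^2 f \\
&= \bigl(f\phi(g) - gf\bigr)\phi(g) + g\bigl(f\phi(g) - gf\bigr) = \Delta_f^{\phi}(g)\phi(g) + g\,\Delta_f^{\phi}(g),
\end{align*}
using that $\phi$ is an anti-automorphism so $\phi(g^2) = \phi(g)\phi(g)$. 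With this in hand the decomposition follows formally, and no further computation is required since the structural work has already been done in Lemmas \ref{2} and \ref{3}.
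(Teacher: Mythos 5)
Your proof is correct and follows essentially the same route as the paper: use Lemma \ref{2} to produce an inner $*$-derivation $\Delta_f^{\phi}$ agreeing with $D$ on all $e_{xx}$, observe that $D-\Delta_f^{\phi}$ is a Jordan $*$-derivation killing the diagonal idempotents, and conclude by Lemma \ref{3}. One caution: your parenthetical claim that ``every inner $*$-derivation is a $*$-derivation'' is false in general (the $*$-derivation identity would force $f\phi(h)\phi(g)=f\phi(g)\phi(h)$; indeed Example \ref{exam} together with Theorem \ref{5} gives an inner $*$-derivation that is not a $*$-derivation), but this does not damage your argument, since the displayed computation you actually carry out verifies only the Jordan $*$-identity for $\Delta_f^{\phi}$, which is all that is needed.
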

\begin{proof}
Let $D$ be a Jordan $*$-derivation of  $I(X,K)$. By Lemma  \ref{2},
there exist an inner $*$-derivation $\Delta_{f}^{\phi}$, 
such that $D(e_{xx})=\Delta_{f}^{\phi}(e_{xx})$ for all $x\in X$. 
Then we have $(D-\Delta_{f}^{\phi})(e_{xx})=0$, by  Lemma \ref{3}, we get $D-\Delta_{f}^{\phi}$ is a transposed Jordan $*$-derivation.
\end{proof}

\begin{theorem}\label{5}
Every Jordan $*$-derivation of $I(X,K)$ is an inner $*$-derivation and a transposed Jordan $*$-derivation.
\end{theorem}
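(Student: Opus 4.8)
The plan is to bootstrap from Lemma \ref{4}, which already gives the decomposition for involutions of the special shape $\phi=M_\sigma\circ\hat\lambda$, to the case of an arbitrary involution by the right-multiplication device of Lemma \ref{c2}, Corollary \ref{c1} and Remark \ref{03}. Thus let $\theta$ be an arbitrary involution of $I(X,K)$ and $D$ a Jordan $*$-derivation under $\theta$. By the decomposition theorem (Theorem \ref{autin}) we may write $\theta=\psi_\mu\circ\phi$ with $\mu\in I(X,K)$ invertible and $\phi=M_\sigma\circ\hat\lambda$.

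First I would push $D$ over to the involution $\phi$: by Lemma \ref{1} (equivalently Corollary \ref{c1}) the map $D_0:=R_{\mu}\circ D$, i.e. $D_0(g)=D(g)\mu$, lies in $\JD^{\phi}$. Now Lemma \ref{4} applies to $D_0$; concretely, using the element $g\in I(X,K)$ with $g(x,y)=D_0(e_{xx})(x,y)$ produced by Lemma \ref{2}, one gets $D_0=\Delta_{g}^{\phi}+T_0$, where $\Delta_{g}^{\phi}$ is an inner $*$-derivation under $\phi$ and $T_0:=D_0-\Delta_{g}^{\phi}$ kills every $e_{xx}$, hence is a transposed Jordan $*$-derivation by Lemma \ref{3}.

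Next I would transport this decomposition back along $R_{\mu^{-1}}$. Since $D=D_0\mu^{-1}=R_{\mu^{-1}}\circ D_0$, we have $D=R_{\mu^{-1}}\circ\Delta_{g}^{\phi}+R_{\mu^{-1}}\circ T_0$. For the first summand, Lemma \ref{c2} with $g\mu^{-1}$ in place of $f$ gives $\Delta_{g}^{\phi}=R_{\mu}\circ\Delta_{g\mu^{-1}}^{\theta}$, whence $R_{\mu^{-1}}\circ\Delta_{g}^{\phi}=\Delta_{g\mu^{-1}}^{\theta}$ is an inner $*$-derivation under $\theta$. For the second summand, Remark \ref{03}, applied with the invertible element $\mu^{-1}$ (so that $\mu^{-1}(x,x)=\mu(x,x)^{-1}$ by \cite[Theorem 1.2.3]{SpDo}), shows that $R_{\mu^{-1}}\circ T_0$ is again a transposed Jordan $*$-derivation, the multiplicative datum $\sigma$ being replaced by $\sigma_0$ with $\sigma_0(\lambda(y),\lambda(x))=\sigma(\lambda(y),\lambda(x))\mu(y,y)\mu(x,x)^{-1}$. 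Adding the two pieces presents $D$ as a sum of an inner $*$-derivation and a transposed Jordan $*$-derivation, which is the assertion.

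The step I expect to be the crux is the second summand in the transport: one must be sure that right multiplication by the invertible element $\mu^{-1}$ really preserves the precise form of a transposed Jordan $*$-derivation — that $R_{\mu^{-1}}\circ T_0$ still sends $e_{xy}$ to $\gamma_{xy}e_{x\lambda(y)}-\gamma_{xy}\sigma_0(\lambda(y),\lambda(x))e_{y\lambda(x)}$ with scalars $\gamma_{xy}$ depending only on $y$, and that $\sigma_0$ is indeed multiplicative and matches the involution carried by $\theta$. This is precisely what Remark \ref{03} records, so the only genuine work is to check that its hypotheses apply verbatim to $\mu^{-1}$; the remaining manipulations are formal consequences of Lemmas \ref{1}, \ref{2}, \ref{3}, \ref{4} together with Lemma \ref{c2}.
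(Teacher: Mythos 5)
Your proposal is correct and follows essentially the same route as the paper: transport $D$ to the involution $\phi=M_\sigma\circ\hat\lambda$ via $R_\mu$ (Corollary \ref{c1}), decompose by Lemma \ref{4}, and transport back with $R_{\mu^{-1}}$, handling the inner part by Lemma \ref{c2} and the transposed part by Remark \ref{03}. You are in fact slightly more explicit than the paper about why Remark \ref{03} applies to $\mu^{-1}$ and what the new multiplicative element $\sigma_0$ is, which is the right point to be careful about.
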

\begin{proof}
Let $\theta$ be an involution of $I(X,K)$ and $D$ a Jordan $*$-derivation of  $I(X,K)$. By Lemma \ref{autin},  $\theta=\psi_{u}\circ M_{\sigma}\circ \hat{\lambda}$ for some invertible $\mu \in I(X,K)$, involution $\lambda$ of $X$ and multiplicative element $\sigma\in I(X,K)$, we denote $\phi=M_{\sigma}\circ\hat{\lambda}$ is an involution of $I(X,K)$ and it is clear that $\theta=\psi_{\mu }\circ\phi$. 

By Corollary \ref{c1}, we have $R_\mu\circ D$ be a Jordan $*$-derivation with involution $\phi$ where $R_\mu$ be a right multiplication operator. Using Lemma \ref{4}, $R_\mu\circ D$ is an inner $*$-derivation and a transposed Jordan $*$-derivation, i.e. $R_\mu\circ D=\Delta_{f}^{\phi}+D_{0}^{\phi}$. 
and then $$D=R_{\mu^{-1}}\circ R_\mu\circ D=R_{\mu^{-1}}\circ\Delta_{f}^{\phi}+R_{\mu^{-1}}\circ D_{0}^{\phi}$$
By Lemma \ref{c2}, $R_{\mu^{-1}}\circ \Delta_{f }^{\phi}= \Delta_{f\mu^{-1}}^{\theta}$ is an inner $*$-derivation and it is also a Jordan $*$-derivation.
This implies $R_{\mu^{-1}}\circ D_{0}^{\phi}=D-\Delta_{f\mu^{-1}}^{\theta}$ is a Jordan $*$-derivation. Since $D_{0}^{\phi}$ is a transposed Jordan $*$-derivation, by Remark \ref{03},  we have $R_{\mu^{-1}}\circ D_{0}^{\phi}$ is a Jordan $*$-derivation.  
\end{proof}


\end{document}